\documentclass[notitlepage,12pt]{article}

\usepackage{amsmath}
\usepackage{amsthm}
\usepackage{amssymb}
\usepackage[margin=2cm]{geometry}
\usepackage{url}

\theoremstyle{plain}
\newtheorem{Theorem}{Theorem}

\newtheorem{Corollary}[Theorem]{Corollary}
\newtheorem{Lemma}[Theorem]{Lemma}

\title{Nodal surfaces in $\mathbb{P}^3$ and coding theory}

\author{Sascha Kurz\\ {\small Dept.\ of Mathematics, University of Bayreuth, 95440 Bayreuth, Germany}\\ {\small sascha.kurz@uni-bayreuth.de}}

\begin{document}

\maketitle

\begin{abstract}
\noindent
To each nodal hypersurface one can associate a binary linear code. Here we show that the binary linear 
code associated to sextics in $\mathbb{P}^3$ with the maximum number of $65$ nodes, as e.g.\ the 
Barth sextic, is unique. We also state possible candidates for codes that might be associated with a 
hypothetical septic attaining the currently best known upper bound for the maximum number of nodes.

\smallskip

\noindent
\textbf{Keywords:} Nodal hypersurface, linear code, Barth sextic, coding theory.

\noindent
\textbf{Mathematics Subject Classification:} 14J70, 94B05
\end{abstract}


\section{Introduction}
\label{sec_introduction}
An irreducible hypersurface $S$ of degree $s$ in a complex projective space 
$\mathbb{P}^n:=\mathbb{P}^n(\mathbb{C})$ is the zero set of an irreducible homogeneous 
polynomial $f(x_0,\dots,x_n)$. A singularity is a point on the hypersurface 
where all partial derivatives vanish. A generic hypersurface is smooth i.e.\ it does not
contain any singularities, so that it is natural to ask which combinations of singularities 
can occur on a hypersurface of given degree $s$ in $\mathbb{P}^n$. For $s=1$ there cannot 
be any singularity and for $s=2$ there can be at most one isolated singularity which has 
to be an ordinary double point, or node for brevity, i.e.\ a singularity where the 
Hessian matrix is invertible. The possible combinations of singularities of cubics in $\mathbb{P}^3$
have been classified by Schl\"afli in 1863 \cite{schlafli1863x} and  the classification of all 
quartic surfaces in $\mathbb{P}^3$ with respect to their occurring singularities has been completed
in 1997 \cite{yang1997enumeration}. In $\mathbb{P}^2$ the maximum number of isolated singularities 
of a plane curve of degree $s$ is given by ${s\choose 2}$ and attained by $s$ lines in general position. In 
higher dimensions no such result is known yet. As the classification problem seems to be quite complex for 
larger parameters it makes sense to add further restrictions. A hypersurface is called 
nodal if all of its singularities are nodes. So, let $\mu(s)$ denote the maximum number of nodes of a 
nodal surface in $\mathbb{P}^3$ that can be described by a polynomial of degree $s$. Clearly we have 
$\mu(1)=0$ and $\mu(2)=1$. The values $\mu(3)=4$ and $\mu(4)=16$ are attained by the Cayley cubic \cite{cayley1869vii} and 
a Kummer surface \cite{kummer1864flachen}, respectively. In 1979 Beauville applied coding theory to determine $\mu(5)=31$ 
\cite{beauville1979nombre} and thereby improve a general upper bound of Basset \cite{basset1906maximum}.   
To this end let $\pi\colon \tilde{S} \rightarrow S$ be a minimal resolution 
of singularities. A set $N$ of nodes on $S$ is called even if there exists a divisor $Q$ on $\tilde{S}$ 
such that $2 Q\sim \pi^{-1}(N)$. The even sets of nodes on $S$ comprise the codewords of a binary linear 
code $C$, which we call the code associated to $S$. In \cite{jaffe1997sextic} Jaffe and Ruberman showed 
$\mu(6)<66$ by excluding the existence of certain binary linear codes, so that $\mu(6)=65$ due to the existence of the 
Barth sextic \cite{barth1996two}. As mentioned by Jaffe and Ruberman it would be very interesting to exactly describe 
the associated code for the known examples of nodal surfaces with many nodes. For quintic surfaces this has been 
accomplished by Beauville \cite{beauville1979nombre} and for generalized Kummer surfaces by Catanese \cite{catanese1996generalized}. 
While an explicit equation of the Barth sextic was known its associated code was first determined by showing the 
uniqueness of the possible codes of sextics with $65$ nodes \cite{kurz2020classification}, cf.\ \cite{catanese2022varieties} 
for a more direct derivation. Due to the importance of the problem, since currently none of the two mentioned preprints 
is published, and in order to popularize the problem we would like to give a streamlined proof of the uniqueness of the 
code associated to a nodal sextic with $65$ nodes in this paper. Additionally we give some information on the next open case of 
degree $d=7$.

The remaining part of this paper is structured as follows. In Section~\ref{sec_introduction} we introduce the necessary preliminaries 
from algebraic geometry and coding theory. The uniqueness of the code associated to a sextic in $\mathbb{P}^3$ with the maximum number 
of nodes is then concluded in Section~\ref{sec_uniqueness}. We close with a conclusion and a few remarks on further open problems, 
including the maximum number of nodes of septics in $\mathbb{P}^3$, in Section~\ref{sec_conclusion}.

\section{Preliminaries}
\label{sec_preliminaries}
An $[n,k]_2$ code $C$, or binary linear code with length $n$ and dimension $k$, is a $k$-dimensional subspace of $\mathbb{F}_2^n$. 
The elements of $C$ are called codewords. The dual code $C^\perp$ of $C$ is the subspace that is perpendicular to $C$, i.e.\ $C^\perp$ is 
an $[n,n-k]_2$ code. The weight $\operatorname{wt}(c)$ of a codeword $c\in C$ is the number of non-zero entries in 
$c$ and the minimum weight $d$ of $C$ is the minimum weight over of the non-zero codewords in $C$, so that we also speak of an $[n,k,d]_2$ code. If 
the minimum weight $d^\perp$ of the dual code $C^\perp$ is at least $3$ we say that $C$ is projective. If 
the non-zero weights of codewords in $C$ are contained in $\left\{w_1,\dots,w_l\right\}$, then we say that $C$ is a $\Delta$-divisible 
$[n,k,\{w_1,\dots,w_l\}]_2$ code if $w_i$ is divisible by $\Delta$ for all $1\le i\le l$. The polynomial $W_C(x):=\sum_{c\in C} x^{\operatorname{wt}(c)}y^{n-\operatorname{wt}(c)}$ is 
called the weight enumerator of $C$. The MacWilliams identity determines the weigh enumerator of the dual code via $W_{C^\perp}(x,y)=\tfrac{1}{2^k}\cdot W_C(x+y,x-y)$ 
\cite{macwilliams1962combinatorial,macwilliams1963theorem}. 
Each $k\times n$ matrix over $\mathbb{F}_2$ whose row span equals $C$ is called a generator matrix of $C$. If 
$C$ contains a codeword with a non-zero entry in position $i$ for all $1\le i\le n$, which is equivalent to $d^\perp\ge 2$, then $C$ has full length and $n$ is called the effective length 
of $C$. In order to ease the notation we only consider $[n,k]_2$ codes of full length. For a given codeword $c$ of an $[n,k]_2$ code the support 
$\operatorname{supp}(c)$ is given by $\{1\le i\le n\,:\, c_i\neq 0\}$, i.e., the cardinality of its support equals its weight. With this the residual 
code $\operatorname{Res}(C;c)$ of an $[n,k]_2$ code $C$ with respect to a non-zero codeword $c$ is the restriction of the codewords to 
$\{1,\dots,n\}\backslash \operatorname{supp}(c)$, which has effective length $n-\operatorname{wt}(c)$.

The code associated to the Cayley cubic is an $[4,1,\{4\}]_2$ code with weight enumerator $W_C(x,y)=x^0y^4+x^4y^0$ and the code associated with a Kummer surface 
is an $[16,5,\{8,16\}]_2$ code with weight enumerator $W_C(x,y)=x^0y^{16}+30x^8y^8+x^{16}y^0$. Geometrically the latter code corresponds to the $16$ points of an affine 
solid and a generator matrix is given by
$$
  \left(\begin{smallmatrix}
  0000000011111111\\
  0000111100001111\\
  0011001100110011\\
  0101010101010101\\
  1111111111111111
  \end{smallmatrix}\right).
$$ 
The code associated to a quintic with $31$ nodes is given by a $[31,5,\{16\}]_2$ code with weight enumerator $W_C(x,y)=x^0y^{31}+31x^{16}y^{15}$. Geometrically such a 
code corresponds to the $31$ points of the projective geometry $\operatorname{PG}(4,2)$ and a generator matrix is given by
$$
  \left(\begin{smallmatrix}
  0000000111111110000000011111111\\
  0001111000011110000111100001111\\
  0110011001100110011001100110011\\
  1010101010101010101010101010101\\
  0000000000000001111111111111111
  \end{smallmatrix}\right).
$$
The following general properties of the associated $[n,k]_2$ code $C$ of a nodal surface with degree $s$ and $m$ nodes are known. For the length we have $n\le m$ 
and for the dimension a general argument of Beauville \cite{beauville1979nombre} gives $k\ge m-\left\lceil s^3/2\right\rceil +2s^2-3s+1$, see 
\cite[Proposition 4.3]{jaffe1997sextic}. If $d$ is odd, then $C$ is $4$-divisible and $8$-divisible otherwise, see \cite[Proposition 2.11]{catanese1981babbage}. 
The minimum distance $d$ satisfies $d\ge 2\lceil s(s-2)/2\rceil$, see \cite[Theorem 1.10]{endrass1997minimal}. So, for sextics with $65$ nodes the associated code 
is a $8$-divisible $[n,k,24]_2$ code with $n\le 65$ and $k\ge 12$.

\section{Uniqueness of the code associated to a sextic with $\mathbf{65}$ nodes}
\label{sec_uniqueness}

First we want to conclude some further properties of $8$-divisible $[n,12,24]_2$ codes (of full length) by purely coding theoretic arguments. First we observe that 
for a $q^r$-divisible linear code over $\mathbb{F}_q$ with $r\ge 1$ each residual code is $q^{r-1}$-divisible, see e.g.\ \cite[Lemma 7]{honold2018partial}. Note 
that we have $C\subseteq C^\perp$ for each $4$-divisible $[n,k]_2$ code, so that $k\le n/2$. Setting $W_C(x,y)=\sum_i a_ix^iy^{n-i}$ and $W_{C^\perp}(x,y)=\sum_i a_i^*x^iy^{n-i}$ 
the equations for the coefficients $y^0$, $y^1$, $y^2$, and $y^3$ in the MacWilliams identity can be rewritten to 
\begin{eqnarray}
  \sum_{i>0} a_i &=& 2^k-1,\\
  \sum_{i\ge 0} ia_i &=& 2^{k-1}n,\\
  \sum_{i\ge 0} i^2a_i &=& 2^{k-1}(a_2^*+n(n+1)/2),\\ 
  \sum_{i\ge 0} i^3a_i &=& 2^{k-2}(3(a_2^*n-a_3^*)+n^2(n+3)/2).
\end{eqnarray} 
We also speak of the first four MacWilliams identities. In this special form, those equations are also known as the 
first four (Pless) power moments \cite{pless1963power}.

\begin{Lemma}
  \label{lemma_n_ge_63}
  Let $C$ be a binary $8$-divisible linear code with minimum distance $d\ge 24$, dimension $k=12$ and effective length $n\le 65$, then
  $a_{40}\ge 1$ and $n\ge 63$.
\end{Lemma}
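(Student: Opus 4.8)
The plan is to reduce the whole statement to a small linear feasibility problem governed by the first four MacWilliams identities $(1)$--$(4)$ together with the non-negativity and integrality of the weight frequencies. First I would fix the admissible weights: since $C$ is $8$-divisible with $d\ge 24$ and every weight is at most the effective length $n\le 65$, the only possible nonzero weights are $24,32,40,48,56,64$, where moreover a weight $w$ can occur only when $w\le n$ (so $64$ needs $n\ge 64$ and $56$ needs $n\ge 56$). Writing $a_w$ for the number of codewords of weight $w$, there are six unknown frequencies $a_{24},\dots,a_{64}$. Since $C$ has full length we have $a_1^*=0$, while $a_2^*$ and $a_3^*$ are the only dual quantities entering $(1)$--$(4)$ and must be treated as free non-negative integers.

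Substituting the admissible weights into $(1)$--$(4)$ turns them into four linear relations among $a_{24},\dots,a_{64}$, $n$, $a_2^*$ and $a_3^*$. Solving this Vandermonde-type system for the four highest frequencies $a_{32},a_{48},a_{56},a_{64}$ in terms of $a_{24},a_{40},n,a_2^*,a_3^*$, the question becomes: for which $n\le 65$ do there exist non-negative integers $a_{24},a_{40},a_2^*,a_3^*$ making all six frequencies non-negative integers summing to $2^{12}-1$? A first, coarse lower bound on $n$ comes already from the second-moment identity (equation $(3)$): rewriting it as $\sum_i i^2 a_i = 2^{11}a_2^*+2^{10}n(n+1)$ and using $a_2^*\ge 0$ gives $\sum_i i^2 a_i\ge 2^{10}n(n+1)$, i.e.\ the second moment of the weight distribution must be large. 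But the first two identities fix both the number of nonzero codewords ($2^{12}-1$) and the mean weight ($\approx n/2$), and with the weights capped at $\min(64,n)$ the largest attainable second moment (reached by pushing mass to the two extreme admissible weights) falls short of $2^{10}n(n+1)$ once $n$ is small. This already produces a quadratic-in-$n$ obstruction that forces $n$ into the low fifties.

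To close the gap to $n\ge 63$ and to obtain $a_{40}\ge 1$, I would feed in the third-moment identity (equation $(4)$) through $a_3^*\ge 0$ together with the non-negativity of the individual high-weight frequencies. Concretely, for each of the finitely many remaining small values $n\le 62$, one of the solved frequencies $a_{32},a_{48},a_{56},a_{64}$, or else $a_2^*$ or $a_3^*$, is forced negative across the one-parameter slack left after the four identities; this is a finite check. For $a_{40}\ge 1$ I would argue by contradiction: setting $a_{40}=0$ shrinks the palette to $\{24,32,48,56,64\}$, and the resulting system then has no non-negative integer solution for any admissible $n$.

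The main obstacle is precisely that $a_2^*$ and $a_3^*$ enter as free non-negative parameters, so the four identities do not determine the weight distribution and no single linear combination isolates $a_{40}$ or bounds $n$. The real work is to organize the non-negativity conditions---equivalently, to solve the resulting low-dimensional integer linear program---so that the cap $n\le 65$ on the weight palette and the non-negativity of the high-weight frequencies together force $a_{40}\ge 1$ and eliminate every $n\le 62$. Keeping track of integrality, and of the slack carried by $a_2^*$ and $a_3^*$, is where the estimates must be handled with care.
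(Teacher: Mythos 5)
There is a genuine gap: your plan asks the first four MacWilliams identities (together with non\-negativity, integrality, and the cap $w\le\min(64,n)$) to do something they provably cannot do. That system is feasible for several lengths below $63$. At $n=61$, take $a_{24}=892$, $a_{32}=3117$, $a_{40}=44$, $a_{48}=42$ and $a_2^*=a_3^*=0$: all four identities are satisfied by non-negative integers, so your ``finite check'' cannot force any solved-for frequency (or $a_2^*$, $a_3^*$) to be negative there; the same happens at $n=56$, $58$ and $62$. Your route to $a_{40}\ge 1$ collapses for the same reason: at $n=58$ the system with $a_{40}=0$ admits the non-negative integer solution $a_{24}=1560$, $a_{32}=2521$, $a_{48}=14$, $a_2^*=4$, $a_3^*=18$ (one checks $1560+2521+14=4095$, $\sum i a_i=2048\cdot 58$, and the third and fourth moments likewise), so shrinking the palette to $\{24,32,48,56,64\}$ does \emph{not} make the system infeasible. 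In short, no reorganization of the integer linear program built from the four moment identities can eliminate every $n\le 62$, because that program has solutions there.

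The idea you are missing is the structural step the paper uses to convert $a_{40}\ge 1$ into $n\ge 63$. The paper solves the identities for $a_{24},a_{32},a_{40},a_{48}$ and observes that in
\begin{equation*}
  a_{40}=-\tfrac12 n^3+\tfrac{205}{2}n^2-6808n+147420+(208-3n)a_2^*+3a_3^*+6a_{56}+20a_{64}
\end{equation*}
every parameter coefficient is non-negative for $n\le 65$, so $a_{40}$ is bounded below by the constant cubic, which is positive for $n\le 53$ and for $61\le n\le 65$; the window $54\le n\le 60$ is treated separately (and indeed needs care: your-style moment analysis alone does not settle it, as the $n=58$ solution above shows). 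Then comes the decisive point, entirely absent from your proposal: given a codeword $c$ of weight $40$, the residual code $\operatorname{Res}(C;c)$ is a $4$-divisible code of dimension $11$, and a $4$-divisible binary code of dimension $11$ has length at least $23$ --- a theorem about divisible codes, not a consequence of the MacWilliams identities. Hence $n-40\ge 23$, i.e., $n\ge 63$, which is what finally disposes of $n=56,58,61,62$. Without importing this divisible-code length bound (or an equally strong classification result), the integer-programming analysis you describe cannot reach the stated conclusion.
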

\begin{proof}
  Solving the first four MacWilliams identities for $a_{24}$, $a_{32}$, $a_{40}$, and $a_{48}$ gives
  $$
    a_{40}=\frac{205}{2}n^2-6808n-\frac{1}{2}n^3+(208-3n)a_2^*+3a_3^*+6a_{56}+20a_{64}+147420 
  $$
  and
  $$
    a_{40}+a_{48}=71n^2-\frac{14504}{3}n-\frac{1}{3}n^3+(144-2n)a_2^*+2a_3^*+2a_{56}+10a_{64}+106470.
  $$    
  Since $a_2^*,a_3^*,a_{56},a_{64}\ge 0$, $208-3n\ge 0$, $144-2n\ge 0$ we have 
  $$
    a_{40}\ge \frac{205}{2}n^2-6808n-\frac{1}{2}n^3+147420  
  $$
  and   
  $$
    a_{40}+a_{48}\ge 71n^2-\frac{14504}{3}n-\frac{1}{3}n^3+106470.  
  $$  
  For $54\le n\le 60$ we have $a_{40}+a_{48}<0$, which is impossible. If either $n\le 53$ or $61\le n\le 65$, 
  then $a_{40}\ge 1$. Thus, $a_{40}\ge 1$. Consider the residual code $C'$ of a codeword of weight $40$. $C'$ has dimension $11$ 
  and is $4$-divisible, so that its length is at least $23$, see e.g.\ \cite[Section VIII]{gaborit1996mass}.  
\end{proof}

We remark that there even exists an $8$-divisible $[64,13,24]_2$ code $C$
with generator matrix 
$$
\left(\begin{smallmatrix}
1111111111111111111111111111111111111111111111111111111111111111\\
0000000000000000000000000000000011111111111111111111111111111111\\
0000000000000000111111111111111100000000000000001111111111111111\\
0000000011111111000000001111111100000000111111110000000011111111\\
0000111100001111000011110000111100001111000011110000111100001111\\
0011001100110011001100110011001100110011001100110011001100110011\\
0101010101010101010101010101010101010101010101010101010101010101\\
0000000000001111001100110011110000111100001100111111000011111111\\
0001001000011101000100100001110101001000010001110100100001000111\\
0000011001100000001110101010001101011100110001011001111111111001\\
0000000001101001000000001001011001011010110011000101101000110011\\
0001001000011101011110110111010000101110110111101011100001001000\\
0000001101010110011001011100111100000011101010011001101011001111\\
\end{smallmatrix}\right).
$$
With these properties the code is unique, as shown by exhaustive enumeration in \cite{kurz2020classification}.  
Its automorphism group has order $23224320$ and its weight enumerator 
is given by $W_C(x,y)=x^{0}y^{64}+1008x^{24}y^{40}+6174x^{32}y^{32}+1008x^{40}y^{24}+x^{64}y^{0}$. 
The code was obtained in \cite{delsarte1975alternating} and has the following nice description, see \cite{jaffe1997sextic}:
It is a subcode of the second order Reed-Muller code $R(2,6)$ containing the first order Reed-Muller code $R(1,6)$ as a subcode. The
cosets of $R(1,6)$ in it correspond to the symplectic forms $B_a$ in $\mathbb{F}_{64}$, given by $B_a(x,y)=\operatorname{tr}((ax^4 + a^{16}x^{16})y)$.
So, subcodes of this code give examples of $8$-divisible $[n,12,24]_2$ codes for $n=63$ and $n=64$. We remark that an $[59,12,24]_2$ code exists 
while the existence of a $[58,12,24]_2$ code is currently unknown. So the $8$-divisibility of the code is a severe restriction.

\begin{Lemma}
  Let $C$ be an $8$-divisible $[65,12,24]_2$ code. Then $C$ does not contain a codeword of weight $64$ or $56$.
\end{Lemma}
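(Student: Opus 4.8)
The plan is to argue by contradiction through the residual code, exactly in the spirit of the proof of Lemma~\ref{lemma_n_ge_63}. Suppose $C$ contains a codeword $c$ of weight $w\in\{56,64\}$ and pass to the residual code $C'=\operatorname{Res}(C;c)$. Since $C$ is $8$-divisible, $C'$ is $4$-divisible by the residual-divisibility property recalled at the beginning of this section. Moreover, because $C$ has full length, every coordinate outside $\operatorname{supp}(c)$ is covered by some codeword of $C$, so the restriction map $C\to\mathbb{F}_2^{65-w}$ has image of full effective length $65-w$; that is, $C'$ is a $4$-divisible code of effective length exactly $9$ when $w=56$ and exactly $1$ when $w=64$. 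The whole statement therefore reduces to the single coding-theoretic claim that no $4$-divisible binary code of full length has effective length $1$ or $9$.

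The case $w=64$ is then immediate: a code of effective length $1$ that is covered by some codeword must contain a word of weight $1$, contradicting $4\mid\operatorname{wt}$. For $w=56$ I would rule out effective length $9$ by first using that a $4$-divisible code is doubly even, hence self-orthogonal, so $C'\subseteq (C')^\perp$ and its dimension $k'$ satisfies $k'\le 9/2$, i.e.\ $k'\le 4$. On the other hand, the first power moment gives $\sum_{x\in C'}\operatorname{wt}(x)=9\cdot 2^{k'-1}$, which is divisible by $4$ only for $k'\ge 3$; thus $k'\in\{3,4\}$. The dimension $k'=3$ can be eliminated by a short counting argument: writing $\lambda_v$ for the multiplicity of each nonzero column $v\in\mathbb{F}_2^3\setminus\{0\}$, the condition $\operatorname{wt}(c_x)=9-\sum_{v\in x^\perp\setminus\{0\}}\lambda_v\equiv 0\pmod 4$ forces every line of the Fano plane to carry $\lambda$-sum $\equiv 1\pmod 4$; summing the three lines through a fixed point shows each $\lambda_v$ is odd, which together with $\sum_v\lambda_v=9$ leaves only the configuration of one triple point and six simple points, and this already violates the line condition on the lines avoiding the triple point.

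The remaining obstacle, and the only genuinely delicate point, is the case $k'=4$, where the analogous per-point counting loses its parity information. Here I would invoke the classification of admissible lengths of $4$-divisible binary codes: every such length lies in the numerical semigroup generated by $4$ and $7$ (realized by the $[4,1,\{4\}]_2$ code $\{0000,1111\}$ and the $[7,3,\{4\}]_2$ simplex code). Since neither $1$ nor $9$ belongs to this semigroup, no $4$-divisible code of effective length $1$ or $9$ exists at all, which settles both subcases uniformly and in particular covers $k'=4$. Consequently $a_{64}=a_{56}=0$, so $C$ contains no codeword of weight $64$ or $56$. I expect the length fact about $4$-divisible codes to be the part one must either cite or, if a self-contained account is preferred, supply via the self-orthogonality bound together with the finite case check just sketched.
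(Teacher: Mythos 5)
Your overall route is the same as the paper's: pass to the residual code of the hypothetical codeword, note that it is $4$-divisible (of effective length $1$ for weight $64$ and $9$ for weight $56$; your observation that full length of $C$ forces the residual to have full effective length is correct and is implicitly used in the paper as well), and then invoke the non-existence of $4$-divisible binary codes of these effective lengths. Your added elementary work is sound as far as it goes: self-orthogonality of doubly-even codes gives $k'\le 4$, the first power moment forces $k'\ge 3$, and your Fano-plane parity count (every line must carry $\lambda$-sum $\equiv 1 \pmod 4$, whence all multiplicities are odd, whence one triple point and six simple points, contradicted by any line avoiding the triple point) correctly eliminates $k'=3$.

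However, the one step that you yourself identify as carrying the whole weight of the remaining case $k'=4$ rests on a false statement. The effective lengths of $4$-divisible binary codes do \emph{not} form the numerical semigroup $\langle 4,7\rangle$; the correct classification --- this is precisely the result of Kiermaier and Kurz \cite{kiermaier2020lengths} that the paper cites at this point --- is the semigroup $\langle 4,6,7\rangle$. Length $6$ is realized, for instance, by the $4$-divisible code $\{000000,111100,001111,110011\}$ of effective length $6$, which does not lie in $\langle 4,7\rangle$. Your conclusion happens to survive, since $9\notin\langle 4,6,7\rangle$ as well (an odd total requires an odd number of copies of $7$, hence exactly one, and $9-7=2\notin\langle 4,6\rangle$) and $1$ is trivially excluded; but as written you are invoking a classification that is simply wrong, and your proposed fallback --- ``the self-orthogonality bound together with the finite case check just sketched'' --- does not close the gap either, because the counting you sketched handles only $k'=3$ and you concede it loses its parity leverage at $k'=4$. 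The fix is immediate: state the classification with generators $4,6,7$ and cite \cite{kiermaier2020lengths} (or give an ad hoc argument ruling out a $4$-divisible $[9,4]_2$ code); with that correction your proof is complete and coincides in substance with the paper's.
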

\begin{proof}
  The residual code of a codeword of weight $64$ would be a $4$-divisible code of length $1$, which obviously cannot exist. Similarly, the residual 
  code of a codeword of weight $56$ would be a $4$-divisible code of length $9$. However, no such code exists \cite{kiermaier2020lengths}.
\end{proof}

We remark that it is also possible to exclude the existence of a codeword of weight $48$ in an $8$-divisible $[65,12,24]_2$ code $C$ by theoretical arguments. 
For a codeword $c$ of weight $48$ the residual code $\operatorname{Res}(C;c)$ is a $[17,k,\{4,8,12\}]_2$ code, so that $k\le 8$ \cite[Section VIII]{gaborit1996mass}. The restriction of the 
codewords of $C$ to the coordinates in the support of $c$ gives a $[48,k',\{24,48\}]_2$ code $C'$. Codes whose only occurring non-zero weights are the minimum distance $d$
and $2d$ are completely characterized in \cite{jungnickel2018classification}. In our situation $C'$ is a uniquely defined $[48,k',\{24,48\}]_2$ code with $k'\in\{4,5\}$, 
so that $k\in\{7,8\}$. Up to isomorphism there are only five $[17,7,\{4,8,12\}]_2$ codes and a unique $[17,8,\{4,8,12\}]_2$ code \cite{doran2011codes,MillerWP}. For a 
computer-free classification for the possibilities for $\operatorname{Res}(C;c)$ one may consider the decomposition of $\operatorname{Res}(C;c)$ into subcodes spanned by codewords of weight $4$, which is
completely characterized in a more general setting \cite{kiermaier2020classification}. Having the explicit possible choices for $C'$ and $\operatorname{Res}(C;c)$ at hand one can easily 
exclude the existence of $C$ with a codeword $c$ of weight $48$. However, since those arguments are quite lengthy, due to many case differentiations, and such a result 
can be easily obtained by exhaustive computer enumeration within seconds we do not go into details.    

\begin{Lemma}
  \label{lemma_ten_cases}
  Let $C$ be an $8$-divisible $[n,12,24]_2$ code with $n\le 65$. Then the non-zero weights of $C$ are contained in $\{24,32,40,64\}$.
\end{Lemma}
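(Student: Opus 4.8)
The plan is to show that in an $8$-divisible $[n,12,24]_2$ code with $n \le 65$, the only possible nonzero weights are $24, 32, 40, 64$. Since the code is $8$-divisible with minimum distance $24$, the a priori possible weights are $24, 32, 40, 48, 56, 64$. So the task reduces to excluding weights $48$ and $56$ (and $64$ when $n < 64$, but $64$ is explicitly allowed in the target set). The main work is therefore to rule out codewords of weight $48$ and weight $56$ for every relevant length $n$, and to handle the possibility that a weight-$64$ codeword forces a contradiction unless $n \ge 64$.

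First I would dispatch the easier length ranges using the residual-code machinery already established. For a codeword $c$ of weight $w$, the residual code $\operatorname{Res}(C;c)$ is a $4$-divisible code of effective length $n - w$ and dimension $11$ (when $c$ is not contained in a larger easily-described subcode). A $4$-divisible binary code of dimension $11$ has a minimum length constraint, and more sharply, the realizable lengths of $4$-divisible codes are classified in \cite{kiermaier2020lengths}. For weight $56$ the residual code has length $n - 56 \le 9$, and since no $4$-divisible code of length $9$ exists (and lengths below that are even more restrictive), this excludes weight $56$ whenever $n \le 65$; I would cite the preceding lemma, which already proves exactly this for $n = 65$, and note that the same residual argument applies verbatim for all $n \le 65$ since the residual length only shrinks.

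The harder case is weight $48$, where the residual code has length $n - 48 \le 17$, which is large enough that cheap nonexistence results do not immediately bite. Here I would invoke the two MacWilliams-based coefficient formulas from the proof of Lemma~\ref{lemma_n_ge_63}, now rewriting the first four power moments to express $a_{48}$ (and combinations such as $a_{48} + a_{56} + a_{64}$) as explicit functions of $n$, $a_2^*$, $a_3^*$, and the high-weight coefficients. Using $8$-divisibility together with the already-proven facts $a_{56} = 0$ and $a_{64} = 0$ for the critical length $n = 65$, and the nonnegativity and sign conditions on the remaining free parameters, I would aim to force $a_{48} = 0$ directly. For the smaller lengths $n \le 64$ where a weight-$64$ codeword cannot be excluded so cleanly, I expect to combine the divisibility-of-residual argument with the structural description of the ambient Reed--Muller subcode (the $[64,13,24]_2$ code and its subcodes), so that any hypothetical weight-$48$ word is shown incompatible with the enumerated weight structure.

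The main obstacle will be the weight-$48$ exclusion, precisely because its residual code is long enough to evade the simple nonexistence theorems and one must lean on the finer classification results. The delicate point is controlling the interplay between the two derived $48$-range codes --- the $[n-48, \le 8]_2$ residual code with weights in $\{4,8,12\}$ and the restriction $[48, k', \{24,48\}]_2$ code --- and showing these cannot coexist inside a genuine $8$-divisible $[n,12,24]_2$ code. The cleanest route, which the excerpt's remark already signals, is to reduce the entire question to an exhaustive verification: I would argue that the bounds $k \le 8$ on the residual and $k' \in \{4,5\}$ on the restriction leave only finitely many, explicitly classifiable configurations (five $[17,7,\{4,8,12\}]_2$ codes and one $[17,8,\{4,8,12\}]_2$ code, together with a unique $[48,k',\{24,48\}]_2$ code), none of which glues back to a valid $C$ containing a weight-$48$ word. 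I therefore expect the proof to conclude by citing this finite case analysis rather than by a single closed-form inequality.
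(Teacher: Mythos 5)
There is a genuine gap, and it sits exactly where you lean hardest on monotonicity. Your weight-$56$ exclusion for $n\in\{63,64\}$ does not work: you claim the residual argument ``applies verbatim for all $n\le 65$ since the residual length only shrinks,'' but the set of realizable effective lengths of $4$-divisible binary codes is not monotone. By the classification in the cited reference, length $9$ is indeed impossible, but lengths $7$ and $8$ are both realizable (the simplex $[7,3,4]_2$ code and, e.g., the $[8,1,8]_2$ repetition code are $4$-divisible). So for $n=64$ the residual of a hypothetical weight-$56$ word has length $8$, and for $n=63$ it has length $7$, and in neither case do you get a contradiction. The paper's preceding lemma is stated only for $n=65$ precisely for this reason. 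Your weight-$48$ exclusion is also not a proof: the hope that the MacWilliams identities force $a_{48}=0$ directly is unsupported (note that in Lemma~\ref{lemma_n_ge_63} the identities only control the combination $a_{40}+a_{48}$, not $a_{48}$ alone), and the fallback case analysis you describe is the paper's own remark, which the paper explicitly declines to carry out because of many case differentiations --- and that sketch pertains only to $n=65$, where the residual length is $17$; for $n=63,64$ the residual lengths are $15,16$ and the entire classification of admissible residual/restriction pairs would have to be redone. Finally, your appeal to the $[64,13,24]_2$ Delsarte--Goethals code as an ``ambient'' code is unjustified: nothing guarantees that an $8$-divisible $[63,12,24]_2$ or $[64,12,24]_2$ code embeds into it.

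The paper's actual proof has a different architecture, which is what closes these holes: it uses Lemma~\ref{lemma_n_ge_63} to guarantee $n\ge 63$ \emph{and} the existence of a weight-$40$ codeword $c$, so that $\operatorname{Res}(C;c)$ is a $4$-divisible $[n-40,11]_2$ code with $n-40\in\{23,24,25\}$. It then exhaustively enumerates all such residual codes by computer ($11$, $83$, and $215$ codes, respectively) and lifts each back to an $8$-divisible $[n,12,24]_2$ code using \texttt{LinCode}, obtaining exactly ten codes in total; inspecting their weight enumerators shows that weights $48$ and $56$ never occur. In other words, the paper does not exclude the bad weights one at a time by local arguments --- it classifies all codes satisfying the hypotheses and reads off the conclusion. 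If you want a computer-free proof along your lines, you would need genuinely new arguments for weight $56$ at lengths $63$ and $64$ and a complete (not sketched) treatment of weight $48$ at all three lengths.
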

\begin{proof}
  From Lemma~\ref{lemma_n_ge_63} we know that $n\ge 63$ and that $C$ contains a codeword $c$ of weight $40$. The residual code then is a $4$-divisible $[n-40,11]_2$ code.
  Using the software package \texttt{LinCode} \cite{bouyukliev2020computer} we have exhaustively enumerated all possibilities for $\operatorname{Res}(C;c)$ up to isomorphism.
  There are exactly $11$ $[23,11]_2$, $83$ $[24,11]_2$, and $215$ $[25,11]_2$ $4$-divisible codes cf.\ \cite{doran2011codes,MillerWP}. Starting from these residual codes 
  we have exhaustively enumerated all possibilities for $C$ using \texttt{LinCode}. Up to isomorphism there are unique $[n,12,24]_2$ codes for $n\in\{63,65\}$ and eight 
  $[64,12,24]_2$ codes (assuming $8$-divisibility) and in none of these cases codewords of weight $48$ or $56$ occur. (Generator matrices for these ten cases are given 
  in \cite{kurz2020classification}.) 
\end{proof}

We remark that the initial classification of the $8$-divisible $[\le 65,12,24]_2$ codes in \cite{kurz2020classification} took almost 1000 single-core CPU hours on a computing cluster. 
There are e.g.\ $978528$ $8$-divisible $[63,10,24]_2$ and $704571$ $8$-divisible $[64,11,24]_2$ codes. Constructing the $8$-divisible $[\le 65,12,24]_2$ codes via the residual 
codes of a codeword of weight $40$ allows to keep the numbers of intermediate codes in the enumeration process much smaller. The determination problem of possible 
codes associated with a sextic with $65$ nodes was indeed the initial motivation to develop a new enumeration algorithm for linear codes. Over the time there were algorithmic 
improvements, see \cite{kurz2020classification,kurz2024computer} for details, so that the computation underlying Lemma~\ref{lemma_ten_cases} can now be performed in less than 
two hours on a single core. 

In \cite{jaffe1997sextic} it was shown that the code associated to a sextic with $65$ nodes cannot contain a codeword with weight $48$ or $64$. This boils down the number of possible codes
to three: 
\begin{Lemma}
  \label{lemma_three_cases}
  Let $C$ be an $[n,12,\{24,32,40,48,56\}]_2$ code with $n\le 65$. Then $C$ is isomorphic to one of the three cases:
    \begin{enumerate}
    \item[(1)]
    $[63,12,24]_2$\\
    $\left(\begin{smallmatrix} 
    001100001110000001111101000011111110010010100100001100000000000\\
    101001111111000000110111010000100110100011011000000010000000000\\
    000100111011100011110111001000010000110000110110100001000000000\\
    010001111111110011001100001001100100010001101000001000100000000\\
    110001110000010111001111011000011100100011000010100000010000000\\
    000000011000110111100011010011101110010001011110000000001000000\\
    010011110001111101010000110100100011101110111111111000000100000\\
    001000110111101100001111110000000001100110000111100000000010000\\
    000111110001100011000000001100011111100001100001111000000001000\\
    000000001111100000111111111100000000011111100000011000000000100\\
    000000000000011111111111111100000000000000011111111000000000010\\
    000000000000000000000000000011111111111111111111111000000000001\\
    \end{smallmatrix}\right)$\\
    $W_C(x,y)=x^{0}y^{63}+630x^{24}y^{39}+3087x^{32}y^{31}+378x^{40}y^{23}$\\
    $\# \operatorname{Aut}(C)=362880$
    \item[(2)]
    $[64,12,24]_2$\\
    $\left(\begin{smallmatrix}
    0000110001101110000100100100100011011000011011011110100000000000\\
    1011110000100110010000001100010000111101001110111000010000000000\\
    1010110001001010110010000000101111110000001100101011001000000000\\
    1111100000001100000010100100111101000011011011101000000100000000\\
    0111000000001010110110001100011000000110111100110011000010000000\\
    0000000100001001111110011010010101001101010101010101000001000000\\
    0101011111010000010001111001110011000100100000101100000000100000\\
    0011010011001000001111111001111111011111010011011100000000010000\\
    0000101111000110000000000111101111000011001111000011000000001000\\
    0000011111000001111111111111100000111111000000111111000000000100\\
    0000000000111111111111111111100000000000111111111111000000000010\\
    0000000000000000000000000000011111111111111111111111000000000001\\
    \end{smallmatrix}\right)$\\
    $W_C(x,y)=x^{0}y^{64}+502x^{24}y^{40}+3087c^{32}y^{32}+506x^{40}y^{24}$\\
    $\# \operatorname{Aut}(C)=5760$
    \item[(3)]
    $[65,12,24]_2$\\
    $\left(\begin{smallmatrix}  
    10000100000000110110010001110100111101010001011110010100000000000\\
    10100100011000001001000110100110111111001000001100011010000000000\\
    01000010011100011000000100110100110000011111011110001001000000000\\
    11110100001110110100000011010110100001011100000100001000100000000\\
    01101011000001100011010001000011001010001111000010111000010000000\\
    00101001110111101011000001011000000110111001001000100000001000000\\
    00011000111111100000111110001000100010001010101001100000000100000\\
    00000111001011100111110001010100000001111001100000011000000010000\\
    00011111000111100000001111001101111110000111100111111000000001000\\
    00000000111111100000000000111100011110000000011111111000000000100\\
    00000000000000011111111111111100000001111111111111111000000000010\\
    00000000000000000000000000000011111111111111111111111000000000001\\
    \end{smallmatrix}\right)$\\
    $W_C(x,y)=x^{0}y^{65}+390x^{24}y^{41}+3055x^{32}y^{33}+650x^{40}y^{25}$\\
    $\# \operatorname{Aut}(C)=15600$
  \end{enumerate}
  All three codes are projective.
\end{Lemma}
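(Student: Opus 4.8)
The plan is to deduce everything from the classification already obtained in Lemma~\ref{lemma_ten_cases}, after which only some bookkeeping and a short projectivity check remain. First I would observe that the hypothesis merely repackages the setting of Lemma~\ref{lemma_ten_cases}: if the non-zero weights of $C$ lie in $\{24,32,40,48,56\}$, then each of them is a multiple of $8$ and at least $24$, so $C$ is an $8$-divisible $[n,12,24]_2$ code with $n\le 65$. Lemma~\ref{lemma_ten_cases} then tells us that the non-zero weights are contained in $\{24,32,40,64\}$; since weight $64$ is excluded by hypothesis, they in fact lie in $\{24,32,40\}$. Moreover the enumeration underlying Lemma~\ref{lemma_ten_cases} shows that, up to isomorphism, $C$ is one of exactly ten codes, namely the unique $[63,12,24]_2$ code, eight $[64,12,24]_2$ codes, and the unique $[65,12,24]_2$ code.

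Next I would sieve out of these ten codes precisely those containing a codeword of weight $64$. For $n=63$ such a codeword cannot exist, so the unique $[63,12,24]_2$ code survives and gives case~(1). For $n=65$ we have already seen that an $8$-divisible $[65,12,24]_2$ code contains no codeword of weight $56$ or $64$, so the unique such code survives and gives case~(3). For $n=64$ a weight-$64$ codeword is forced to be the all-ones vector; since containing the all-ones vector is preserved by coordinate permutations, this is a well-defined property of isomorphism classes, and inspecting the eight codes shows that exactly one of them avoids the all-ones vector. That code gives case~(2), and the stated weight enumerators and automorphism-group orders are read off from the same classification.

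It then remains to verify projectivity, i.e.\ $d^\perp\ge 3$. Full length already yields $a_1^*=0$, so it suffices to show $a_2^*=0$ in each case. For this I would insert the three weight enumerators into the third MacWilliams identity $\sum_i i^2 a_i = 2^{k-1}\!\left(a_2^*+n(n+1)/2\right)$ and solve for $a_2^*$; in each case the left-hand side turns out to equal $2^{11}\cdot n(n+1)/2$, forcing $a_2^*=0$ and hence $d^\perp\ge 3$. For instance in case~(1) one has $\sum_i i^2 a_i = 576\cdot 630 + 1024\cdot 3087 + 1600\cdot 378 = 2048\cdot 2016 = 2^{11}\cdot\tfrac{63\cdot 64}{2}$, so indeed $a_2^*=0$.

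The only genuine content lies in the exhaustive enumeration already performed for Lemma~\ref{lemma_ten_cases}; everything else is routine. The single step deserving care is the case $n=64$, where one must confirm that exactly one of the eight isomorphism classes omits the all-ones codeword, so that the representative chosen in case~(2) is unambiguous.
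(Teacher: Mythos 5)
Your proposal is correct and takes essentially the route the paper intends: the paper gives no separate proof of this lemma, obtaining it implicitly from the ten-code classification of Lemma~\ref{lemma_ten_cases} by discarding the seven $[64,12,24]_2$ codes that contain the all-ones (weight-$64$) codeword, exactly as you do, with the same reliance on the computational classification data for the $n=64$ sieve. Your explicit verification of projectivity via the third MacWilliams identity (whose arithmetic checks out in all three cases, giving $a_2^*=0$) is a small addition that the paper merely asserts.
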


\begin{Theorem}
  \label{thm_main}
  If $S$ is a sextic with $65$ nodes in $\mathbb{P}^3$ then its associated code is given by Lemma~\ref{lemma_three_cases}.(3).
\end{Theorem}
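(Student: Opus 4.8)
The plan is to rule out cases (1) and (2) from Lemma~\ref{lemma_three_cases} by showing that only the $[65,12,24]_2$ code of case (3) can actually arise as the code associated to a nodal sextic with $65$ nodes. The key point is that the abstract coding-theoretic constraints assembled so far narrow the possibilities to exactly three candidates, but the \emph{geometric} origin of the code imposes one further restriction that the first two candidates violate: namely, the length $n$ of the associated code must equal the full number of nodes, $n=65$. Recall from Section~\ref{sec_preliminaries} that for a nodal surface with $m$ nodes the associated code has length $n\le m$, with a strict inequality exactly when some node lies in the support of \emph{no} even set, equivalently when the code fails to have full length in the stronger geometric sense. So the strategy is to argue that for a sextic the associated code must in fact use all $65$ coordinates.

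First I would recall the precise geometric meaning of the coordinates: each of the $65$ nodes corresponds to one coordinate position, and a codeword is the indicator vector of an even set of nodes. The effective length of the code is the number of nodes that appear in at least one even set. Thus a code of length $63$ or $64$ would force one or two nodes to belong to no even set at all. The heart of the argument is then to show this cannot happen for a sextic with the maximal number of nodes, i.e.\ that every node of such a surface is contained in some even set. The cleanest route is to invoke the structural results on even sets of nodes on sextics established in \cite{jaffe1997sextic}, together with the divisibility and weight constraints; alternatively one can count using the first MacWilliams identities and the known weight enumerators to detect that cases (1) and (2) would correspond to surfaces with a deficient incidence pattern between nodes and even sets.

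Concretely, the steps in order would be: (i) state that the associated code has length exactly $65$ because each node of a sextic with $65$ nodes lies on an even set, citing the relevant analysis of Jaffe and Ruberman; (ii) combine this with Lemma~\ref{lemma_three_cases}, which already lists the only three $8$-divisible $[n,12,24]_2$ codes with $n\le 65$ whose weights avoid $48$ and $64$ (these exclusions being exactly the geometric constraints of \cite{jaffe1997sextic}); (iii) observe that cases (1) and (2) have length $63$ and $64$ respectively and are therefore incompatible with $n=65$, leaving only case (3). I would also record that case (3) is realized, since the Barth sextic furnishes an explicit example, so the identified code is not vacuous.

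The main obstacle I expect is establishing rigorously that the length is exactly $65$, i.e.\ that no node can be isolated from all even sets. This is the genuinely geometric input and cannot be obtained from pure coding theory, since the codes of lengths $63$ and $64$ exist as abstract linear codes (indeed Lemma~\ref{lemma_ten_cases} and the surrounding remarks exhibit them). One must therefore appeal to the finer properties of the resolution $\pi\colon\tilde S\to S$ and the divisor-theoretic definition of even sets: the key is that on a sextic the collection of even sets is rich enough — because the relevant cohomology or the dimension bound $k\ge 12$ saturates — to cover every node. Handling this rigorously may require either citing the detailed case analysis of \cite{jaffe1997sextic} on which nodes can fail to be ``used,'' or re-deriving the covering property from the constraint $k=12$ together with the weight enumerators; verifying that the deficient-length candidates genuinely cannot be promoted to valid geometric configurations is where the care lies.
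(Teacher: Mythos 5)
There is a genuine gap, and it sits exactly where you yourself flag the ``main obstacle.'' Your entire argument rests on step (i): that the associated code of a $65$-nodal sextic has effective length exactly $65$, i.e.\ that every node lies in some even set. This is not a result you can cite: Jaffe and Ruberman \cite{jaffe1997sextic} prove no such covering statement (their arguments work throughout with $n\le m$), and the general theory recalled in Section~\ref{sec_preliminaries} likewise gives only $n\le m$. The claim is true \emph{a posteriori} (the answer turns out to be the length-$65$ code), but as an input it is precisely what has to be proved, and it cannot be extracted from the coding-theoretic constraints: the codes of lengths $63$ and $64$ in Lemma~\ref{lemma_three_cases} satisfy every condition assembled up to that point ($8$-divisibility, $d=24$, $k=12$, weights in $\{24,32,40\}$), so nothing short of a new geometric theorem excludes a sextic with one or two nodes lying in no even set. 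Your proposed fallbacks --- ``the detailed case analysis of \cite{jaffe1997sextic}'' (which does not exist in that paper) or ``re-deriving the covering property from $k=12$ together with the weight enumerators'' --- do not constitute such a theorem.

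The paper closes this gap with a different geometric input, due to Endra\ss{} \cite{endrass1997minimal}: the associated code $C$ always extends to a code $C'\supsetneq C$ with $\dim(C')=\dim(C)+1$ whose new codewords $C'\setminus C$ have weights in $\{16,28,32,36,\dots\}$. A \texttt{LinCode} computation then shows that among the three candidates only case (3) admits such an extension (and the resulting $C'$ is unique), which is what eliminates cases (1) and (2) --- not any length consideration. A second, smaller omission in your proposal: Lemma~\ref{lemma_three_cases} classifies codes of dimension exactly $12$, whereas the geometric bound is only $k\ge 12$; the paper therefore also checks by computer that none of the three codes extends to a $[\le 66,13,\{24,32,40,48,56\}]_2$ code before concluding that $C$ must be one of the three. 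Your proposal applies Lemma~\ref{lemma_three_cases} directly and thus silently assumes $k=12$. So the skeleton (reduce to three candidates, then apply one more geometric restriction) matches the paper, but the restriction you chose is unprovable as stated, and the one that actually works is the Endra\ss{} extension property combined with an explicit computation.
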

\begin{proof}
  Let $C$ be the code associated to a sextic in $\mathbb{P}^3$. In \cite{endrass1997minimal} the existence of a code $C'\supsetneq C$ with $\dim(C')=\dim(C)+1$ 
  such that the codewords in $C'\backslash C$ have weights contained in $\{16,28,32,36,\dots\}$ was shown. First we use \texttt{LinCode} to check that none 
  of the cases of Lemma~\ref{lemma_three_cases} can be extended to a $[\le 66,13,\{24,32,40,48,56\}]_2$ code, so that there are just three choices for $C$. Then 
  we used \texttt{LinCode} to extend $C$ to $C'$.   
\end{proof}
We remark that the code $C'$ in the proof of Theorem~\ref{thm_main} is unique. A generator matrix its given by
$$
  \left(\begin{smallmatrix}
  100000000011000100000110100000010001100011111110011111001111001010\\
  010000001000101000001110000010111100000110011011000100101100010100\\
  001000001011111100001011110000101110100110011000100000110110110111\\
  000100001001000100000111100001010001101010101010011000001010001011\\
  000010001010010000001101000101100000111001010100010101000101110010\\
  000001001001111000000101011000111011000100111111010111011001011100\\
  000000101011001100001100100010110101100111111111011110000011001100\\
  000000011010110100000110101100100010101100111110011111001111001010\\
  000000000111100000001111101001010011110011111110010110010101011010\\
  000000000000000010001000001000100100010011111111101110101010100101\\
  000000000000000001000100000100011000100011111111011101101001011010\\
  000000000000000000100010100010000001000111111110111011010110101010\\
  000000000000000000010001010001000010001011111101110111010101010101
  \end{smallmatrix}\right).
$$ 
The automorphism group of $C'$ has order $15600$ and the weight enumerator of $C'\backslash C$ is given by $26x^{16}y^{50} 
+ 650x^{28}y^{38} + 1690x^{32}y^{34} + 1300x^{36}y^{30} + 300x^{40}y^{26} + 130x^{44}y^{22}$. For a nice description of 
the associated code $C$ of a sextic with $65$ nodes and its automorphism group we refer to \cite[Section 4]{catanese2022varieties}. Since the residual codes 
of codewords of weight $40$ played an important role in the determination of $C$ we mention that they all are unique with weight enumerator 
$W_{\operatorname{Res}(C;c)}(x,y)=x^{0}y^{25}+3x^{4}y^{21}+258x^{8}y^{17}+1278x^{12}y^{13}+493x^{16}y^9+15x^{20}y^5$ and an automorphism group of order $4608$. 
A generator matrix is given by 
$$
  \left(\begin{smallmatrix}  
    0000100100100100000000000\\ 
    0100011010100111000000000\\ 
    1010010000101110100000000\\ 
    1100100010101010010000000\\ 
    1111101111001010001000000\\ 
    0101111000010100000100000\\ 
    0011101100010010000010000\\ 
    0011100001101010000001000\\ 
    0000100000000110000000100\\ 
    0000011100011110000000010\\ 
    0000000011111110000000001\\ 
  \end{smallmatrix}\right).
$$

While Theorem~\ref{thm_main} shows the uniqueness of the code associated to a sextic with the maximum number of nodes, the sextic itself is far from being unique. In 
\cite[Theorem 5.5.9]{PhdPettersen} a $3$-parameter family of sextics with $65$ nodes, including the Barth sextic, was obtained; cf.\ \cite[Theorem 196]{catanese2022varieties}.

The non-zero weights occurring in a code associated to a nodal sextic in $\mathbb{P}^3$ are contained in $\{24,32,40,56\}$ and all cases can indeed occur \cite{catanese2007even,van2018genus}.
The exclusion of weight $56$ in \cite{casnati1997even} turned out to be incorrect \cite{casnati1998errors}. In \cite[Lemma 2.3]{pignatelli2007wahl} it was show that a binary code 
with non-zero weights in $\{24,32,56\}$ has dimension at most $10$ and in \cite[Lemma 2.1]{pignatelli2007wahl} it was show that a binary code with non-zero weights in $\{24,32\}$ has dimension 
at most $9$.
\begin{Lemma}
  Let $C$ be a $[n,k,\{24,32\}]_2$ code with $n\le 56$. Then, we have $k\le 8$. Moreover, if $k=8$, then $n\in\{51,54,55,56\}$. For $(n,k)=(51,8)$ the code is projective.
\end{Lemma}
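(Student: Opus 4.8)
The plan is to bound the dimension $k$ of a binary $[n,k,\{24,32\}]_2$ code with $n \le 56$ using the first few MacWilliams identities, exactly as in the proof of Lemma~\ref{lemma_n_ge_63}. Writing $W_C(x,y) = a_0 y^n + a_{24} x^{24} y^{n-24} + a_{32} x^{32} y^{n-32}$ with $a_0 = 1$, the code has only two nonzero weights, so there are just three unknown coefficients $a_0, a_{24}, a_{32}$. The first three MacWilliams identities (the equations for $y^0$, $y^1$, $y^2$ in the excerpt) give three linear relations involving $n$, $k$, and the dual coefficients $a_2^\ast, a_3^\ast, \dots$. First I would solve the $y^0$ and $y^1$ identities for $a_{24}$ and $a_{32}$ in terms of $n$ and $k$, obtaining
\[
  a_{24} = \frac{4(2^k-1) - 2^{k-1}n}{8}, \qquad a_{32} = \frac{2^{k-1}n - 3(2^k-1)}{8},
\]
or the analogous closed form. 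Then the $y^2$ identity expresses $a_2^\ast$ as a function of $n$ and $k$; since $a_2^\ast \ge 0$ (indeed for a projective code $a_2^\ast = 0$), this yields an inequality that forces $k \le 8$ once $n \le 56$ is imposed.

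More concretely, after substituting $a_{24}$ and $a_{32}$ into the third power moment, I would isolate $a_2^\ast$ and require $a_2^\ast \ge 0$ together with $a_{24} \ge 0$ and $a_{32} \ge 0$. These three nonnegativity constraints, combined with the constraint $24 \le d$ and $n \le 56$, should pin down the feasible region in $(n,k)$ tightly enough that $k = 9$ is infeasible for every $n \le 56$. For the refinement when $k = 8$, I would set $k = 8$ in the expressions for $a_{24}, a_{32}, a_2^\ast$ and read off which integer values of $n \le 56$ keep all three coefficients nonnegative integers; the integrality of $a_{24}$ and $a_{32}$ (each must be a nonnegative integer) is the key divisibility condition that should cut the admissible $n$ down to $\{51,54,55,56\}$. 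In effect one needs $2^{k-1} n \equiv 0 \pmod 8$ and the two expressions above to be nonnegative integers, and for $k=8$ these congruences plus the range restriction leave only four values.

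For the final claim that the $(n,k) = (51,8)$ code is projective, the plan is to show $d^\perp \ge 3$, equivalently $a_1^\ast = a_2^\ast = 0$. The condition $a_1^\ast = 0$ is automatic since $C$ has full length (this is $d^\perp \ge 2$, built into our standing assumption in Section~\ref{sec_preliminaries}). For $a_2^\ast = 0$ I would substitute $(n,k) = (51,8)$ into the formula for $a_2^\ast$ derived above and verify it evaluates to zero; since the weight enumerator is completely determined by $(n,k)$ for a two-weight code via the first three identities, this is a single arithmetic check rather than a structural argument.

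The main obstacle I anticipate is not the algebra of the power moments, which is routine, but establishing that $k \le 8$ rigorously rather than merely for the projective case: a priori $a_2^\ast$ and $a_3^\ast$ need not vanish, so the inequality $a_2^\ast \ge 0$ alone may not immediately exclude $k = 9$ for all $n$ in the range. To close this gap I expect to also use the fourth power moment (the $y^3$ identity) to get a second inequality involving $a_3^\ast \ge 0$, paralleling the two-inequality strategy of Lemma~\ref{lemma_n_ge_63}, and to check that the resulting system has no integer solution with $k = 9$, $n \le 56$. If the purely linear-programming bound proves too weak, the fallback is to invoke the same residual-code descent used elsewhere: a codeword of weight $32$ has a residual $[n-32, k-1]_2$ code that is $4$-divisible with the induced weight restrictions, and known length bounds for such small $4$-divisible codes (as in the references already cited) would supply the missing constraint.
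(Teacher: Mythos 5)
Your proposal has a genuine gap: the first four power moments, together with nonnegativity and integrality of $a_{24}$, $a_{32}$, $a_2^*$, $a_3^*$, are provably too weak to establish either $k\le 8$ or the restriction to $n\in\{51,54,55,56\}$. Carrying out your plan gives $a_{24}=4(2^k-1)-2^{k-4}n$, $a_{32}=2^{k-4}n-3(2^k-1)$, $a_2^*=56n-\tfrac{n(n+1)}{2}-1536+\tfrac{1536}{2^k}$, and $3a_3^*=-n^3+168n^2-9344n+172032+\tfrac{4608n-172032}{2^k}$. Integrality of $a_2^*$ does force $k\le 9$ (since $2^k$ must divide $1536=2^9\cdot 3$), and nonnegativity excludes $n\le 53$ when $k=9$; but at $(n,k)=(54,9),(55,9),(56,9)$ the system is perfectly feasible, with $(a_{24},a_{32},a_2^*,a_3^*)=(316,195,6,10)$, $(284,227,7,32)$, $(252,259,7,56)$ respectively --- all nonnegative integers --- so no contradiction can be extracted, and your anticipated use of the fourth moment does not close the case $k=9$. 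The same happens for the length claim at $k=8$: the parameters $(52,8)$ and $(53,8)$ yield the feasible solutions $(188,67,4,24)$ and $(172,83,7,39)$, so the moments cannot cut the admissible lengths down to $\{51,54,55,56\}$. Your fallback via residual codes fails as well: the residual of a weight-$32$ codeword of a hypothetical $[56,9,\{24,32\}]_2$ code would be a $4$-divisible $[24,8]_2$ code, and $4$-divisible codes of dimension $8$ exist already at length $16$ (two copies of the extended Hamming code), so length bounds for divisible codes impose nothing here. The only parts of the lemma your computation does deliver are $n\ge 51$ when $k=8$ (since then $a_2^*<0$ for $n\le 50$) and the projectivity claim at $(n,k)=(51,8)$, where the forced value $a_2^*=0$ indeed gives a clean, even slightly more direct, argument than the paper's.

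The paper's actual proof rests on inputs of a completely different kind. For $k=8$ it quotes the code tables: every $[n,8,24]_2$ code has $n\ge 51$; a non-projective $[51,8,\{24,32\}]_2$ code would yield, by shortening at two equal coordinates, a $[49,7,24]_2$ code, contradicting the table bound $n\ge 50$ for dimension $7$; and the cases $n\in\{52,53\}$ are eliminated by the dual transform of Bouyukliev--Bouyuklieva, which would produce a $[67,8,32]_2$ resp.\ $[83,8,40]_2$ code, neither of which exists. The bound $k\le 8$ is then proved geometrically: a putative $[l,9,\{24,32\}]_2$ code is extended by a weight-$56$ codeword to a $[56,10,\{24,32,56\}]_2$ code with $a_2^*=7$ and $a_3^*=0$; writing $a_2^*=\sum_P {m_P \choose 2}$ over the points of the associated multiset shows some point has multiplicity $1$, and projecting through it gives w.l.o.g.\ $l=55$. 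Applying the already-established length classification for $k=8$ to projections of the $[55,9,\{24,32\}]_2$ code restricts all of its point multiplicities to $\{1,4\}$, so its $a_2^*$ must be a multiple of ${4\choose 2}=6$, contradicting the value $a_2^*=7$ computed from the moments. This bootstrapping --- external nonexistence results, the dual transform, and the interpretation of $a_2^*$ via point multiplicities under projection --- is precisely what is missing from your proposal; the power moments appear in the paper only as a subroutine, never as the engine of the contradiction.
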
 
\begin{proof}
  Since each $[n,8,24]_2$ code satisfies $n\ge 51$, see \cite{bouyukhev2000smallest}, we have $51\le n\le 56$. The fact that each $[n,7,24]_2$ code satisfies $n\ge 50$ implies 
  that the code is projective for $(n,k)=(51,8)$. For an $[n,8,\{24,32\}]_2$ code we apply the dual transform as in \cite[Theorem 3.3]{bouyukliev2024dual} with $\alpha=\tfrac{1}{8}$ and 
  $\beta=-3$. The projective dual code $D_{\alpha,\beta}(C)$ has length $128\alpha n+255\beta$ and minimum distance $64(\alpha n+2\beta)$ if $n<2^k-1$.  
  Since no $[67,8,32]_2$ code and no $[83,8,40]_2$ code exists, see \cite{bouyukhev2000smallest}, we have $n\notin\{52,53\}$. Let $C'$ be the $[56,9,\{24,32,56\}]_2$ code arising from 
  $C$ by adding a codeword of weight $56$. Then we clearly have $a_{56}=1$ and $a_{24}=a_{32}$ for $C'$. Using the MacWilliams identities we compute $a_{24}=a_{32}=255$ and $a_2^*=10$.
  Let $\mathcal{P}'$ be the multiset of points of cardinality $56$ in $\operatorname{PG}(8,2)$ corresponding to $C'$, see e.g.\ \cite{dodunekov1998codes}. Projection of $\mathcal{P}'$ 
  through a point of multiplicity $m\ge 1$ yields a multiset of points $\mathcal{P}$ of cardinality $56-m$ in $\operatorname{PG}(7,2)$ that corresponds to an $[56-m,8,\{24,32\}]_2$ code, 
  so that $m\in\{1,2,5\}$. Since each point of multiplicity $m$ contributes ${m\choose 2}$ to $a_2^*$ the multiset $\mathcal{P}'$ consists either of one point of multiplicity $5$ and $51$ 
  points of multiplicity $1$ or five points of multiplicity $2$ and $46$ points of multiplicity $1$.
  
  Now assume that $H$ is an $[l,9,\{24,32\}]_2$ code and $H'$ the $[56,10,\{24,32,56\}]_2$ code that arises from $H$ by adding a codeword of weight $56$. Let $\mathcal{Q}$ and 
  $\mathcal{Q}'$ be the multisets of points corresponding to $H$ and $H'$, respectively. For $H'$ we compute $a_{24}=a_{32}=511$, $a_{56}=1$, $a_2^*=7$, and $a_3^*=0$. Clearly, 
  $\mathcal{Q}'$ contains a point of multiplicity $1$ and projection through this point yields a multiset of points that corresponds to a $[55,9,\{24,32\}]_2$ code. W.l.o.g.\ we 
  assume that $H$ was chosen as such a code, i.e., we have $l=55$. From the classification of the possible lengths of $[\le 56,8,\{24,32\}]_2$ codes we conclude that the 
  point multiplicities in $\mathcal{Q}$ are contained in $\{0,1,4\}$. From the MacWilliams identities we compute $a_{24}=284$, $a_{32}=227$, and $a_2^*=7$ for $H$. 
  Since each point of multiplicity $m$ contributes ${m\choose 2}$ to $a_2^*$ this is impossible. 
\end{proof}
Up to isomorphism there are unique $[51,8,\{24,32\}]_2$ and $[54,8,\{24,32\}]_2$ codes, two $[55,8,\{24,32\}]_2$ codes, three two $[56,8,\{24,32\}]_2$ codes, and 
two $[56,9,\{24,32,56\}]_2$ codes. Generator matrices are given by
\begin{eqnarray*}
  \!\!\!\!\!\!&&\left(\begin{smallmatrix}
    101100011111111111100001010111111111100111110000000\\
    010011011111111000011110101111111110011111101000000\\
    000000100001111111111110000000011111111001100100000\\
    001111100110011001100110011001100110101010100010000\\
    001111111000011110011001100001111000101100100001000\\
    110111100110000011101011000010111010010101100000100\\
    110101101010101010011100011100001011100110100000010\\
    000110100011001100101101111111010101010100100000001  
  \end{smallmatrix}\right),
  \left(\begin{smallmatrix}
    111111111111000000111111111111110000000001111110000000\\
    000000000000111111111111111111110000000100011001000000\\
    000000011111000000000000001111111111111001111100100000\\
    000111111111111111000011111111110001111010000100010000\\
    001000111111001111001100110000110010001010111000001000\\
    001111000111000001000101110011000110011111100100000100\\
    011111111011000110000110001101001110010010001000000010\\
    101001111001110000000110110001001001101011101100000001
  \end{smallmatrix}\right),\\
  \!\!\!\!\!\!&&\left(\begin{smallmatrix}
    1111111111111000000111111111111110000000001111010000000\\
    0000000000000111111111111111111110000000100011001000000\\
    0000000111111000000000000001111111111111001111000100000\\
    0001111111111111111000011111111110001111010000000010000\\
    0110011001111000011000011110000110110011001100100001000\\
    0011101010001001101011101111111001011111111111000000100\\
    1110001101110011111000110110111010111100111111000000010\\
    1111101010010101100100011010001100001100110011000000001
  \end{smallmatrix}\right),
  \left(\begin{smallmatrix}
    1111111111111111111000000000000000111111111111010000000\\
    1111111110000000000111111111100000111111111110101000000\\
    1111111110000111111000001111111111000000111110100100000\\
    0000000000000000111001111111100111000111111111000010000\\
    0000011110011111011000010001111001111000000011100001000\\
    1111100010100011111010110110100010001011000100000000100\\
    1111100101001101001010000111101000011100001001100000010\\
    0000001110110011010111110010010000001101011111000000001
  \end{smallmatrix}\right),\\
  \!\!\!\!\!\!&&\left(\begin{smallmatrix}
    11111111111111111111111111000000000000111110000010000000\\
    00000011111111111111000000111111000000001101000001000000\\
    11111100000000111111000000000000111111111100100000100000\\
    11111100001111111111001111111111001111000000010000010000\\
    00111100000011000011111111000011110011001100001000001000\\
    11001100111100001100110011001100001111110000000100000100\\
    00011111001100110100000000010101001111010101011100000010\\
    11001101011111001011000100000001010111000111000100000001  
  \end{smallmatrix}\right),
  \left(\begin{smallmatrix}
    11111111111111111111111111111110000000000000000010000000\\
    00000000111111111111111100000001111000011100000001000000\\
    11111111000000001111111100000000000111100011100000100000\\
    00001111000011110000111100000001111111100000011100010000\\
    00111111011100010001000100011110011011100000100100001000\\
    00010001100100110001011100000110100111111101101000000100\\
    11110011101111010111100000011110000101101111111100000010\\
    01011111111000010001011101100001001010001101010000000001
  \end{smallmatrix}\right),\\
  \!\!\!\!\!\!&&\left(\begin{smallmatrix}
    11111111111111111111111111111110000000000000000010000000\\
    00000000111111111111111100000001111000011100000001000000\\
    11111111000000001111111100000000000111100011100000100000\\
    00001111000011110000111100000001111111100000011100010000\\
    00010111001100110001000101111110001001100100011100001000\\
    11111001011100010001011110111110110011101111100100000100\\
    01100000110101000010111010111110011010011000101000000010\\
    10101010000111010111101110111111111000110101011100000001
  \end{smallmatrix}\right),
  \left(\begin{smallmatrix}
    11111111111111111111111111000000000000111110000100000000\\
    00000011111111111111000000111111000000001101000010000000\\
    11111100000000111111000000000000111111111100100001000000\\
    11111100001111111111001111111111001111000000010000100000\\
    00111100000011000011111111000011110011001100001000010000\\
    00000101110100001100011111000111000001011111101000001000\\
    00001100011011010011100000011101111101000110110000000100\\
    11000001110100011111100011001001001110001010110000000010\\
    11110111101000111100101100101111001110101111101000000001  
  \end{smallmatrix}\right),\\
  \!\!\!\!\!\!&&\left(\begin{smallmatrix}
    11111111111111111110000001111100000000001111111100000000\\
    11111111111111000001111110000011111000001111101010000000\\
    00000000000000111111111110000000000111111111111001000000\\
    00000001111111111110001110011111111001111111100000100000\\
    00111110001111011111110110011100111110010011110000010000\\
    01000000010111101110010001100101011110010101110000001000\\
    11000000111011110111100010101100100010110010010000000100\\
    00111110010001000010110101100100001101011110101000000010\\
    01111110110010000101000000101110110110010101001000000001  
  \end{smallmatrix}\right).
\end{eqnarray*}  

\begin{Corollary}
  Let $C$ be a $[56,k,\{24,32,56\}]_2$ code. Then, we have $k\le 9$.
\end{Corollary}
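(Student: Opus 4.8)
The plan is to reduce the statement to the preceding Lemma on $[n,k,\{24,32\}]_2$ codes by isolating the unique codeword of weight $56$. The crucial observation is that in a binary code of length $56$ a codeword of weight $56$ must have every coordinate equal to $1$, so it is the all-ones vector $\mathbf{1}=(1,\dots,1)$; in particular $C$ contains at most one codeword of weight $56$. If $\mathbf{1}\notin C$, then every nonzero codeword of $C$ has weight in $\{24,32\}$, so $C$ is a $[56,k,\{24,32\}]_2$ code and the preceding Lemma gives $k\le 8\le 9$ at once.

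So the substantive case is $\mathbf{1}\in C$. Here I would choose a basis $v_1=\mathbf{1},v_2,\dots,v_k$ of $C$ and set $C'=\langle v_2,\dots,v_k\rangle$, a subspace of dimension $k-1$. By linear independence $\mathbf{1}\notin C'$, and since $\mathbf{1}$ is the only weight-$56$ codeword in all of $C$, the subcode $C'$ contains no codeword of weight $56$. Hence every nonzero codeword of $C'$ has weight in $\{24,32\}$. Passing to the effective length only decreases the length, so $C'$ is (equivalent to) a $[n',k-1,\{24,32\}]_2$ code with $n'\le 56$, and the preceding Lemma yields $k-1\le 8$, that is $k\le 9$.

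I do not expect a genuine obstacle: once the weight-$56$ stratum is recognised as the single vector $\mathbf{1}$, the bound is an immediate consequence of the classification of $[\le 56,k,\{24,32\}]_2$ codes established above. The only point requiring a little care is that the chosen complement $C'$ need not have full length---discarding $\mathbf{1}$ can make individual coordinates identically zero on $C'$---but removing such coordinates preserves all codeword weights and merely shrinks the effective length below $56$, which is exactly the regime covered by the hypothesis of the preceding Lemma. An equivalent viewpoint is that $\mathbf{1}\in C$ makes the weight set $\{0,24,32,56\}$ stable under complementation $w\mapsto 56-w$, pairing weight $24$ with weight $32$; this confirms the internal consistency of the structure but is not needed for the estimate.
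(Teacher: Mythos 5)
Your proof is correct and is precisely the intended derivation: the paper states the Corollary without a proof because it follows from the preceding Lemma exactly as you argue --- the unique weight-$56$ word must be the all-ones vector, and splitting it off leaves a $[\le 56,k-1,\{24,32\}]_2$ code, forcing $k-1\le 8$. Indeed, the Lemma's own proof uses this correspondence in the reverse direction (adjoining a weight-$56$ codeword to a $[\le 56,9,\{24,32\}]_2$ code to form a $[56,10,\{24,32,56\}]_2$ code), so your reduction, including the care taken with effective length, matches the paper's viewpoint.
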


We remark that there exists a $[56,10,24]_2$ code with weight enumerator $W_C(x,y)=x^0y^{56}+399x^{24}y^{32}+224x^{28}y^{28}+399x^{32}y^{24}+x^{56}y^{0}$.

\section{Conclusion and outlook}
\label{sec_conclusion}
We have determined the unique code associated to a sextic in $\mathbb{P}^3$ with the maximum number of nodes in Theorem~\ref{thm_main}. For each $0\le \mu\le 65$ there exists a sextic 
in $\mathbb{P}^3$ with $\mu$ nodes \cite{barth1996two,catanese1982constructing}. Using similar techniques for sextics with $64$ codes there currently remain just seven possible candidates 
for the associated code \cite[Theorem 227]{catanese2022varieties}. However for sextics with $63$ nodes such a list of candidates would be rather huge, so that there is a need for more 
necessary conditions on the associated codes that might by exploited using coding theory techniques. 

For septics in $\mathbb{P}^3$ the currently best known bounds on the maximum number of nodes are given by $99\le \mu(7)\le 104$ \cite{PhdLabs,labs2004septic,varchenko1983semicontinuity}. 
With respect to the upper bound we mention that the $[96,10,44]_2$ code with weight enumerator $W_C(x,y)=x^0y^{96}+504x^{44}y^{52}+124x^{48}y^{48}+336x^{52}y^{44}+56x^{60}y^{36}+3x^{64}y^{32}$ 
given by the generator matrix
$$
  \left(\begin{smallmatrix}
    100000000010000100111101100000110001001101111110001000001101010100110011101101110011111000101100\\
    010000000001100010011110110000011000000110111111100100000110101010011001110110111001011100010110\\
    001000000010010101110010111000111101101110100001011010001110100001111111010110101111110110100111\\
    000100000001001010111001111100011110110111010000001101000111110000111111101011010111111011010011\\
    000010000010100001100001111110111110010110010110101110101110101100101100011000011000000101000101\\
    000001000001110000110000011111011111001011001011010111010111010110010110001100001100100010100010\\
    000000100010011100100101001111011110101000011011100011100110011111111000101011110101101001111101\\
    000000010001101110010010000111101111110100001101010001110011001111111100110101111010110100111110\\
    000000001010010011110100000011000110110111111000100000110100010011001101110111001110100010110011\\
    000000000101001001111010000001100011011011111100010000011010101001100110011011100111110001011001
  \end{smallmatrix}\right)
$$
satisfies all constraints on the weights from \cite{endrass1997minimal}. In the special case where an even set of nodes 
is cut out by a smooth cubic surface the corresponding codeword must have weight $36$, while weight $40$ cannot occur \cite{endrass1997minimal}. To this end,
a $[94,10,36]_2$ code with weight enumerator $W_C(x,y)=x^0y^{94}+120x^{36}y^{58}+182x^{44}y^{50}+489x^{48}y^{46}+192x^{52}y^{42}+14x^{56}y^{38}+26x^{60}y^{34}$ 
is given by the generator matrix
$$
  \left(\begin{smallmatrix}
    1111111111111110000000000001111110000001111111111110000001100000000000000000000000100000000000\\
    0000001111110001111110000000000001111111111111111110001110011000000000000000000000010000000000\\
    0001110001110000000001111111111111111110000001111110001110000110000000000000000000001000000000\\
    1111111111110001111111111110000001111110000000000001110000000001100000000000000000000100000000\\
    0001111110001110001110001110001110001110001110001111111110000000011000000000000000000010000000\\
    0001110000001110000001110001111111110001111110000001111110000000000110011100000000000001000000\\
    0001110001111111111110001111110001110000001110001110000000000000000001111100000000000000100000\\
    0010010010010110110110010110010010110010110010010010010110111110101110101111111000000000010000\\
    1111111110000000001111110000001110000000001111110001110000000000000000011100011111000000001000\\
    0000110100100010011010111010100101010001010110110010110011101011010011100101101011000000000111
  \end{smallmatrix}\right).
$$
So, additional restrictions on the code are needed in order to improve upon the upper bound for $\mu(7)$ using coding theoretic methods. The algorithmic tools for 
the exhaustive enumeration of linear codes are available. A first step might be the computation of associated codes of known hypersurfaces with many nodes.

For another type of singularities, so-called cusps, one can associate a $3$-divisible code over $\mathbb{F}_3$ \cite{barth2007cusps}, see also 
\cite{PhdLabs} for a general overview on hypersurfaces with many singularities.

On the coding theory side there are a few more sophisticated techniques that we have not applied in Section~\ref{sec_uniqueness}. If a binary linear code is 
$2^r$-divisible then there are some restrictions on the number of codewords whose weights are divisible by $2^{r+1}$, see e.g.\ \cite[Proposition 5]{dodunekov1999some}. 
The MacWilliams identities can be generalized to coordinate partitions \cite{simonis1995macwilliams}. In combination with linear programming this was used quite successfully 
to show the non-existence of some binary linear codes or to even show uniqueness of some optimal codes, see e.g.\ \cite{jaffe2000optimal}. However, applying those techniques 
usually comes with extensive computer calculations. So, in order to keep the paper relatively short and since all of the used computer calculations were performed in 
a reasonable short amount of time we have not applied those techniques to our problem.

\newcommand{\etalchar}[1]{$^{#1}$}


\end{document}